\newtheorem{theorem}{Theorem}%
\newtheorem{proposition}{Proposition}%
\newtheorem{definition}{Definition}%
\newtheorem{remark}{Remark}
\begin{document}

\baselineskip=4.6mm plus0.1mm minus0.01mm

\makeatletter

\newcommand{\E}{\mathrm{e}\kern0.2pt} 
\newcommand{\D}{\mathrm{d}\kern0.2pt}
\newcommand{\RR}{\mathbb{R}}
\newcommand{\CC}{\mathbb{C}}%
\newcommand{\ii}{\kern0.05em\mathrm{i}\kern0.05em}

\renewcommand{\Re}{\mathrm{Re}} 
\renewcommand{\Im}{\mathrm{Im}}

\def\checkadj{%
\setbox0=\hbox to \textwidth{\hfill\raise0.6ex\hbox{{%
\vtop{\tolerance=7000\hyphenpenalty=3000\leftskip=0pt\rightskip=0pt plus 1pt\relax
\hsize=13mm\tiny\noindent \textcolor{red}{\huge$\checkmark$}}}}\kern-16mm}%
\dp0=0pt\ht0=0pt\vadjust{\box0}}%
\def\alert#1{\textcolor{red}{#1}\checkadj}

\def\bottomfraction{0.9}

\title{\bf Two-dimensional sloshing: \\ domains with interior `high spots'}

\author{Nikolay Kuznetsov and Oleg Motygin}

\date{}

\maketitle

\vspace{-8mm}

\begin{center}
Laboratory for Mathematical Modelling of Wave Phenomena, \\ Institute for Problems
in Mechanical Engineering, Russian Academy of Sciences \\ V.O., Bol'shoy pr. 61, St
Petersburg 199178, Russian Federation \\ E-mail: nikolay.g.kuznetsov@gmail.com;
o.v.motygin@gmail.com
\end{center}

\begin{abstract}
\noindent Considering the two-dimensional sloshing problem, our main focus is to
construct domains with interior high spots; that is, points, where the free surface
elevation for the fundamental eigenmode attains its critical values. The so-called
semi-inverse procedure is applied for this purpose. The existence of high spots is
proved rigorously for some domains. Many of the constructed domains have multiple
interior high spots and all of them are bulbous at least on one side.
\end{abstract}

\setcounter{equation}{0}

%\vspace{1mm}

\section{Introduction}

The sloshing problem (it describes natural frequencies and the corresponding wave
eigenmodes in an inviscid, incompressible, heavy fluid bounded above by a restricted
free surface) is of great interest to engineers, physicists and mathematicians. Its
two-dimensional version concerns transversal free oscillations of fluid in an
infinitely long canal of uniform cross-section. A historical review of studies in
this area going back to the 18th century can be found in \cite{FK}, whereas various
aspects of the problem are presented in the monographs \cite{FT}, \cite{I} and
\cite{KK}; the last one provides an advanced mathematical approach to the problem
based on spectral theory of operators in a Hilbert space.

Our aim is to consider time-harmonic sloshing in two dimensions and to show that there exist fluid
domains in which `high spots' (points, where the free surface elevation of a
fundamental sloshing mode attains its extrema) are located inside the mean free
surface. It should be recalled that the free surface elevation of a fluid in the sloshing
motion is proportional to the trace of the velocity potential on this horizontal part of domain's boundary. Therefore, at every moment high spots are determined by the trace's maxima and minima provided a time-harmonic
factor is removed.

The notion of high spot was introduced in \cite{KK1} by analogy with
a hot spot; a conjecture about the latter (formulated by J.~Rauch in 1974; see,
e.g., the extensive article \cite{BB}) says that any eigenfunction, corresponding to
the smallest nonzero eigenvalue of the Neumann Laplacian in a bounded domain $F
\subset \RR^m$, attains its maximum and minimum values on $\partial F$. The latter has been proven for some domains, most of which are
special domains in the plane; there is also a counterexample to the conjecture
involving $F \subset \RR^2$ with two holes. For highlights of these results, see the
recent article \cite{JM}.

It is well-known that the problem, that describes sloshing in a three-dimensional
container having a constant depth and vertical walls, is equivalent to the
eigenvalue problem for the Neumann Laplacian in the domain $F \subset \RR^2$, which
is the container's free surface (indeed, separation of variables immediately yields
this). Hence, the absence of interior high spots for such a container follows from the hot spot
result for $F$. 

For other container's geometries, the situation with high spots is not as
simple and not directly connected to hot spots of the free surface. Indeed, it was proved rigorously \cite{KuK} and
established experimentally \cite[Fig.~7]{Ku}, that locations of high spots may vary essentially
for convex axisymmetric containers when their form changes slightly. High spots
are located inside the free surface of bulbous containers (see \cite[Fig.~2]{Ku})
and on the boundary of the free surface when the container's cross-section decreases
with the depth or is constant (see \cite[Fig.~1]{Ku}); see also \cite{KKS} for
further details. The latter (boundary location of high spots) is also proved rigorously
\cite{KK2} for sloshing in troughs of uniform cross-section, whose bottom\,---\,the
graph of a \mbox{$C^2$-function}\,---\,forms nonzero angles with the undisturbed free surface.
Earlier, the same statement was proved for sloshing in two dimensions \cite{KK1}.

% in domains satisfying the so-called John's condition.

%In \cite{KK1}, a characterization of some classes of domains without interior high spots was given.

%; the latter is either $\cos \omega t$ or $\sin \omega t$, where
%$\omega$ is the radian frequency of fluid oscillations.

Recently, the first study \cite{WTHO} was published which investigates the effects
of surface tension on the location of high spots in two- and three-dimensional
ice-fishing problems. Computational methods are used to demonstrate that the high
spot is in the interior of the free surface for large Bond numbers, but for
sufficiently small Bond number the high spot is on the boundary of the free surface.

Along with the classical linear sloshing problem considered in the present paper,
there are various approaches to nonlinear sloshing phenomena; see, for example, the
monograph \cite{Luk} and references cited in its Chapter~8, in which some typical
nonlinear phenomena discovered experimentally long ago are listed; in particular,
dependence of the sloshing frequency on wave amplitude, mobility of nodal curves on
the free surface, {\it etc}. In this chapter, special attention is paid to sloshing
in cylindrical containers, which allowed the author to obtain some qualitative
results.

Another phenomenon similar to nonlinear sloshing is that of standing waves. Their existence
on the surface of an infinitely deep perfect fluid under gravity is established in
the extensive article \cite{IPT}, where the two-dimensional waves periodic both in space and time are
studied in the framework of fully nonlinear model. Also, several approximate
approaches developed earlier by a number of authors are reviewed.

\clubpenalty=10000
The plan of the present paper is as follows. Statement of the sloshing problem is
formulated in Sect.~2, where general results about it (including the properties of
nodal domains) are also described. Rigorously proved results are presented in
Sect.~3; the so-called semi-inverse procedure is applied for this purpose. An
example of domain with a single interior high spot is investigated in detail in
Sect.~3.1, whereas similar results are outlined for another domain in Sect.~3.2.
Further examples are obtained numerically and considered in Sect.~4. In Sect.~5,
some characteristic features of domains with interior high spots are summarized.

\section{Statement of the problem and general results}

Let an inviscid, incompressible, heavy fluid occupy an infinitely long canal of
uniform cross-section bounded above by a free surface of finite width. The surface
tension is neglected and the fluid motion is assumed to be irrotational and of
small-amplitude. The latter assumption allows us to linearize boundary conditions on
the free surface; see \cite[Sect.~I.1]{M} for details briefly outlined below. In the
case of the two-dimensional motion in planes normal to the generators of canal's
bottom, the following relations arise:
\begin{equation}
g \eta (x, t) + \phi_t (x, 0, t) = 0 \, , \quad \eta_t (x, t) - \phi_y (x, 0, t) = 0
\, . \label{time}
\end{equation} 
In these relations, $\eta (x, t)$ and $\phi (x, y, t)$ are the time-dependent free
surface profile and velocity potential, respectively, whereas rectangular Cartesian
coordinates $(x,y)$ are taken in the plane of motion so that the $x$-axis lies in
the mean free surface, whereas the $y$-axis is directed upwards.

The second condition \eqref{time} is a kinematic condition; it is a consequence of
continuity of the fluid motion and the assumption that the latter is irrotational.
The first condition \eqref{time} follows from the Hamilton's principle
\[ \delta L = 0 \, , \quad \mbox{where} \ \ L = \frac{\rho}{2} \int_0^t \left[ \int_W
|\nabla \phi|^2 \, \D x \D y - g \int_F \eta^2 \, \D x \right] \D t
\]
is the standard Lagrangian in which $\rho$ stands for the fluid's density. Here the
cross-section $W$ of the canal is a bounded simply connected domain whose piecewise
smooth boundary $\partial W$ has no cusps. One of the open arcs forming $\partial W$
is an interval~$F$ of the $x$-axis (the free surface of fluid in equilibrium).

In view of relations \eqref{time}, if time-harmonic oscillations at the radian
frequency $\omega$ having the form
\begin{equation}
\eta (x, t) = \zeta (x) \sin \omega (t-a) \, , \quad \phi (x, y, t) = u (x, y) \cos
\omega (t-a) \label{time'}
\end{equation}
exist for some real $a$, then the real-valued velocity potential $u (x,y)$ must
satisfy the following boundary value problem:
\begin{align}
& u_{xx} + u_{yy} = 0\quad {\rm in}\ W, \label{lap} \\ & u_y =
\nu u\quad {\rm on}\ F, \label{nu} \\ & \partial u/\partial n =
0\quad {\rm on}\ B. \label{nc}
\end{align}
Here the bottom $B = \partial W\setminus \overline F$ is the union of open arcs
lying in the half-plane $y<0$ and complemented by corner points (if there are any)
connecting these arcs, where the normal derivative $\partial/\partial n$ is defined.

The Laplace equation \eqref{lap} expresses the zero-divergence condition for the
velocity; \eqref{nc} is the no-flow condition on the rigid bottom. Relation
\eqref{nu} arises by excluding $\eta$ from relations \eqref{time}, whereas the free
surface elevation $\zeta (x)$ is proportional to the trace $u (x, 0)$ of the
velocity potential according to \eqref{time'}.

We suppose the orthogonality condition
\begin{equation}
\int_F u\,\D x = 0 \label{ort}
\end{equation}
to hold, thus excluding the eigenvalue $\nu=0$ of problem \eqref{lap}--\eqref{nc},
and so the spectral parameter is $\nu = \omega^2/g$, where $g$ is the constant
acceleration due to gravity.

It is known since the 1950s that problem \eqref{lap}--\eqref{ort} has a discrete
spectrum; that is, there exists a sequence of eigenvalues
\begin{equation}
0<\nu_1 \leq \nu_2 \leq \dots \leq \nu_n \leq \dots,
\label{seq}
\end{equation}
each counted according to its finite multiplicity and such that $\nu_n \to \infty$
as $n \to \infty$. The set of corresponding eigenfunctions $\{ u_n \}_1^\infty$
belongs to the Sobolev space $H^1 (W)$ and forms a complete system in an appropriate
Hilbert space. These results can be found in many sources; see, for example,
\cite{KK} for a comprehensive mathematical treatment.

\subsection{Nodal domains and their properties}

Let $N(u) = \{ (x,y)\in \overline W:\, u(x,y)=0 \}$ be the set of nodal lines of a
sloshing eigenfunction $u$. A connected component of $W\setminus N(u)$ is called a
nodal domain for $u$. Properties of the nodal lined and domains are closely related to
our considerations, and so we provide a summary of assertions proved in \cite{KKM}:

\vspace{1mm}

\noindent (i) {\it If\/ $R$ is a nodal domain of\/ $u$, then\/ $\overline R \cap F$ is a
subinterval of\/ $F$.}

\noindent (ii) {\it The number of nodal domains corresponding to\/ $u_n$ is less than
or equal to\/ $n+1$.}

\noindent (iii) {\it The sloshing eigenfunction\/ $u_n$ cannot change sign more than\/
$2n$ times on\/ $F$.}

\vspace{1mm}

\noindent Combining these properties and condition \eqref{ort}, one arrive at the
following statement.
\begin{proposition}\label{prop:1}
A fundamental sloshing eigenfunction $u_1$ has a single nodal line which divides $W$
into two nodal domains; this line has one or both ends on $F$.
\end{proposition}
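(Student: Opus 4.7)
The plan is to combine the three nodal-domain properties (i)--(iii) with the orthogonality condition \eqref{ort}. Property (ii) with $n=1$ immediately bounds the number of nodal domains of $u_1$ by two; the two substantive tasks are then to exclude the one-domain case and to pin down the ends of the resulting nodal line on $F$.

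First I would show that $u_1$ must change sign in $W$. Supposing the contrary, a sign normalisation together with the strong maximum principle for the non-trivial harmonic function $u_1$ gives $u_1>0$ throughout $W$, and hence $u_1\ge 0$ in the trace on $F$. Combined with $\int_F u_1\,\D x=0$ from \eqref{ort} this forces $u_1\equiv 0$ on $F$, and then the boundary condition \eqref{nu} yields $u_y\equiv 0$ on $F$ as well. A harmonic function with vanishing Cauchy data on an open arc must vanish identically (by Holmgren's theorem, or by an even reflection across $F$), contradicting the fact that $u_1$ is a nontrivial eigenfunction. Hence $u_1$ has exactly two nodal domains $R_+$ and $R_-$, on which $u_1$ is positive and negative respectively.

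Next I would analyse the separating nodal set $N(u_1)\cap W$. Because $W\setminus N(u_1)$ consists of only the two components $R_\pm$, this set must be a single arc: any additional component would either split $W$ into three or more pieces, contradicting (ii), or enclose a bounded subdomain on which the maximum principle would again force $u_1\equiv 0$. Property (i) then tells us that $\overline{R_+}\cap F$ and $\overline{R_-}\cap F$ are both subintervals of $F$. Neither can reduce to a set of measure zero, since otherwise $u_1$ would have one sign on $F$ and the orthogonality argument of the previous paragraph would once more yield $u_1\equiv 0$. Hence both subintervals have positive length and share a common endpoint $p\in\overline F$ at which $u_1(p,0)=0$; this $p$ is where one end of the nodal line sits on $F$. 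The remaining end lies somewhere on $\partial W=\overline F\cup B$, which gives ``one or both ends on $F$''.

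\textbf{Main obstacle.} The subtlest point is the single-arc claim for the nodal set inside $W$; it relies on the local real-analytic structure of zero sets of harmonic functions together with a topological count that reconciles the number of components of $N(u_1)\cap W$ with the number of nodal domains produced by it. The remaining steps are fairly direct consequences of (i)--(iii) combined with the orthogonality \eqref{ort} via the unique-continuation / Hopf-type arguments sketched above.
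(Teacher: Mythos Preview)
Your argument is correct and follows exactly the route the paper indicates: the paper's entire proof is the single sentence ``Combining these properties and condition \eqref{ort}, one arrives at the following statement,'' and you have supplied the details of that combination. Property (ii) bounds the number of nodal domains by two; orthogonality \eqref{ort} plus unique continuation excludes the one-domain case; and property (i), again with \eqref{ort}, forces both nodal domains to meet $F$ in non-degenerate subintervals sharing a common point, which pins at least one end of the nodal arc on~$F$. Your identification of the single-arc claim as the only genuinely delicate step is also accurate.
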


\subsection{Sloshing in terms of the stream function; auxiliary results}

The approach applied in the paper \cite{KKM} demonstrates that another spectral
problem equivalent to \eqref{lap}--\eqref{ort} is convenient to deal with; it
involves the stream function $v$ (a~harmonic conjugate of $u$ in $W$ defined up to
an additive constant):
\begin{align}
& v_{xx} + v_{yy} = 0 \quad {\rm in} \ W , \label{lapv} \\ & -v_{xx} = \nu v_y
\quad {\rm on} \ F , \label{nuv} \\ & v = 0 \quad {\rm on} \ B . \label{dcv}
\end{align}
Notice that condition \eqref{dcv} is obtained from \eqref{nc} with an
appropriate choice of the additive constant; moreover, it implies both conditions
\eqref{nc} and \eqref{ort}. It is obvious that all eigenvalues of problems
\eqref{lapv}--\eqref{dcv} and \eqref{lap}--\eqref{ort} have the same multiplicity.

Let $N (v) = \{ (x,y) \in \overline W: \, v (x,y) = 0 \}$ denote the set of nodal
lines of a sloshing eigenfunction $v$. A connected component of $W \setminus N$ is
called a nodal domain of $v$. The following results obtained in \cite{KKM} are of importance for our considerations.

\begin{proposition}\label{prop:2}

Let $v$ be a stream eigenfunction in $W$ corresponding to the eigenvalue $\nu_1$, then

%the additive constant in its definition can be chosen in such a way that $v=0$ on $B$ and positive in $W\cup F$, so that

{\rm (i)} the single nodal domain of\/ $v$ is $W$;

{\rm (ii)} the trace\/ $v (x, 0)$ cannot change sign on\/ $F$ and has a single extremum
there.
\end{proposition}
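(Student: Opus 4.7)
The plan is to transfer information about the corresponding $u$-eigenfunction to $v$. Since $W$ is simply connected, $v$ has a harmonic conjugate $u$ (with $u_x=v_y$, $u_y=-v_x$), and its additive constant can be fixed so that $\int_F u\, \D x=0$; with this normalization $u$ satisfies \eqref{lap}--\eqref{ort} with eigenvalue $\nu_1$, so Proposition~1 and property~(i) of Section~2.1 are available for $u$. The additive constant in $v$ itself is fixed by \eqref{dcv}, so $v$ vanishes at the two corners of $F$. On $F$, the Cauchy--Riemann equations together with \eqref{nu} give $v_x(x,0)=-u_y(x,0)=-\nu_1 u(x,0)$, whence
\[
v(x,0)=-\nu_1\int_{x_0}^{x} u(t,0)\, \D t, \qquad x\in F.
\]

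The key preliminary step will be to prove that $u$ changes sign on $F$ \emph{exactly once}. By Proposition~1, $u$ has exactly two nodal domains $R_1,R_2$, and by orthogonality \eqref{ort} it must change sign on $F$ at least once. If it changed sign at two or more points, $F$ would split into three or more open sign-constant subintervals, alternately positive and negative; with only two nodal domains available, two same-sign subintervals would then have to lie in the same $R_j$, so $\overline{R_j}\cap F$ would be disconnected, contradicting property~(i) of Section~2.1. I therefore conclude that $u$ has a single sign change on $F$, at some $c\in(x_0,x_1)$, and the nodal line of $u$ joins $c$ to a point of $B$.

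With this in hand, the displayed formula will immediately yield (ii): $v(\cdot,0)$ is an antiderivative of a function with one sign change whose total integral over $F$ vanishes, so it keeps a constant sign on the open interval and has a single interior critical point at $c$, which is the required extremum. For (i), $v$ is harmonic in $W$, vanishes on $B$, and has just-established constant-sign boundary values on $F$; the weak maximum principle then gives $v\ge 0$ (or $\le 0$) throughout $W$, while the strong maximum principle forces $v\ne 0$ in the interior. Hence $W\setminus N(v)=W$ is the single nodal domain claimed in (i). The main obstacle is the preliminary step: excluding additional sign changes of $u$ on $F$ requires the precise juxtaposition of ``exactly two nodal domains'' (Proposition~1), the connectedness of each $\overline{R_j}\cap F$ (property~(i)), and the orthogonality \eqref{ort}; none of these ingredients suffices on its own.
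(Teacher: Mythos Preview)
Your argument is correct. The reduction to showing that $u$ has exactly one sign change on $F$ is the right move, and your use of property~(i) from Section~2.1 together with the two-nodal-domain structure from Proposition~\ref{prop:1} cleanly rules out two or more sign changes; the integral representation $v(x,0)=-\nu_1\int_{x_0}^x u(t,0)\,\D t$ then gives (ii) immediately, and the strong maximum principle gives (i).

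Note, however, that the paper does not supply its own proof of Proposition~\ref{prop:2}: it merely quotes the result from \cite{KKM}, so there is no in-paper argument to compare yours against. Your proof is in the spirit of that reference (which develops the nodal-domain machinery you invoke), and you have correctly identified that the crux is pinning down a \emph{single} sign change of $u$ on $F$; neither the Courant-type bound (property~(ii) of Section~2.1) nor the orthogonality alone suffices, and it is exactly the interval property~(i) that closes the gap. One small wording point: you write ``$v$ has a harmonic conjugate $u$'', but with your Cauchy--Riemann convention $u_x=v_y$, $u_y=-v_x$ it is $v$ that is the harmonic conjugate of $u$; the equations you actually use are correct, so this is purely cosmetic.
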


Notice that the first assertion is analogous to the Courant nodal domain theorem for
the Dirichlet Laplacian.

\section{Fluid domains with interior high spots \\ (rigorous results)}

A version of the so-called inverse method is applied here. It is worth mentioning
that this method was widely used in continuum mechanics in the pre-computer era; see
\cite{Nem} for a survey. There are two forms of this method that distinguish by the
use of boundary conditions. The method is referred to as semi-inverse if some of
these conditions, but not all of them, are prescribed at the outset which is
convenient for applications in the linear water-wave theory. In particular, Troesch
\cite{Troesch} sought a solution of the sloshing problem in the form of a
combination of harmonic polynomials satisfying the free-surface boundary condition
for a certain frequency. Then the homogeneous Neumann condition was applied for
determining the shape of container's bottom, thus giving a family of paraboloids of
revolution as bottom surfaces. A similar procedure is applied below for construction
of fluid domains with interior high spots.

\subsection{Semi-inverse method for $\bm{\nu = 3/2}$}

A particular pair of conjugate harmonic  velocity potential/stream functions is used in our version of
semi-inverse method, namely:
\begin{align}
 u (x,y) &= \int_0^\infty \frac{\cos k (x-\pi) + \cos k (x+\pi)} {k-\nu} \, \E^{k
y}\,\D k \, , \label{uu} \\ 
 v (x,y) &= \int_0^\infty \frac{\sin k(x-\pi) + \sin
k(x+\pi)} {\nu - k} \, \E^{k y} \, \D k \, . \label{vv}
\end{align}
If $\nu = 3/2$ (this pair and similar ones were introduced in
\cite[Subsection~4.1.1]{KMV}), then both numerators vanish at $k=\nu=3/2$, and so
the integrals are the usual converging infinite integrals. Similar functions were
originally proposed by M.~McIver~\cite{MI}, who used them in her construction of
modes trapped by a pair of two-dimensional bodies in the water wave problem.

It is easy to verify that $u$ and $v$ are conjugate harmonic functions in
$\RR^2_-$, and
\[ u (-x,y) = u (x,y) \quad {\rm and} \quad v (-x,y) = -v (x,y). \]
Moreover, $u$ and $v$ are infinitely smooth up to $\partial \RR_-^2 \setminus \{ x =
\pm \pi,\,y=0 \}$, and well-known facts from theory of distributions imply that
$\left[ u_y - \nu u \right]_{y=0}$ is equal to a linear combination of Dirac's
measures at $x=\pi$ and $x=-\pi$. Therefore,
\begin{equation}
u_y = \nu u\quad {\rm on}\ \partial\RR^2_-\setminus \{ x=\pm \pi,\,y=0 \}.
\label{y=0} 
\end{equation}
The calculated nodal lines of $u$ and $v$ are shown in Fig.~\ref{fig1}\,(b); the
line plotted in solid has the following properties; see \cite[Prop.~2.1]{KKM}.

\begin{proposition}\label{prop:3}
If\/ $\nu = 3/2$ in \eqref{vv}, then along with\/ $\{ x=0,\,y<0 \}$, there is only
one nodal line of\/ $v (x,y)$ in\/ $\RR^2_-$. It is smooth, symmetric about the\/
$y$-axis and its both ends are on the\/ $x$-axis; the right one, say\/ $(x_0,0)$,
lies between the origin and the singularity point\/ $(\pi,0)$.
\end{proposition}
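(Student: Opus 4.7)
The identity $\sin k(x-\pi)+\sin k(x+\pi)=2\sin(kx)\cos(k\pi)$ shows that $v$ in \eqref{vv} is odd in $x$, so $v(-x,y)=-v(x,y)$ and the half-line $\{x=0,\,y<0\}$ is automatically contained in $N(v)$. It remains to locate every other component of $N(v)\cap\RR^2_-$, show that there is exactly one, and establish its smoothness and the asserted endpoint structure.

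I would first examine the trace $v(x,0^-)$ on $(0,\pi)$. Since $\nu=3/2$ cancels the pole at $k=\nu$ in \eqref{vv}, differentiation under the integral sign is legitimate on $F\setminus\{\pm\pi\}$. Combining this with the free-surface identity $v_x=-\nu u$ on $F$ (a direct consequence of \eqref{y=0} together with Cauchy--Riemann $u_y=-v_x$), one checks that $v_x(0,0)$ has a definite sign, so $v(x,0)>0$ for small $x>0$. A separate local analysis at the singularity $(\pi,0)$, whose structure is encoded by the Dirac-mass contribution to $[u_y-\nu u]_{y=0}$ noted in the text, shows that $v(x,0)$ has a definite negative limit as $x\to\pi^-$. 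Continuity then produces at least one zero $x_0\in(0,\pi)$, and uniqueness and simplicity of $x_0$ follow from the free-surface identity $v_{xx}+\nu v_y=0$ on $F\setminus\{\pm\pi\}$ combined with monotonicity information on $v_y(x,0)$ obtained by integration by parts in \eqref{vv}.

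With $\nabla v(x_0,0)\neq0$, the analytic implicit function theorem furnishes a smooth nodal arc $C^+$ emanating from $(x_0,0)$ into $\{x>0,\,y<0\}$; its mirror image $C^-$ starts at $(-x_0,0)$. To show that $C^+$ and $C^-$ join into a single smooth symmetric curve, one traces $C^+$: it cannot close into a loop inside $\{x>0,\,y<0\}$ by the maximum principle, and it cannot escape to infinity because integration by parts at $k=0$ in \eqref{vv} yields the asymptotic $v(x,y)\sim -4x/(3y^2)$ as $y\to-\infty$, so $v<0$ throughout $\{x>0\}$ for $|y|$ sufficiently large. Hence $C^+$ must terminate on the $y$-axis, and the reflection symmetry then completes the nodal curve.

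The hardest step, in my view, is ruling out \emph{further} nodal curves of $v$ in $\RR^2_-$. Once the $y$-axis and $C^+\cup C^-$ are in place, they partition $\RR^2_-$ into three open subdomains, on each of which $v$ must then be shown to have constant sign. A Courant-type argument using the trace sign pattern on $F$, the asymptotic decay as $|y|\to-\infty$, and the maximum principle should suffice, but extra care is required near the two boundary singularities $(\pm\pi,0)$ and in the unbounded subdomain where there is no compactness.
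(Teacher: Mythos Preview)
The paper does not actually prove Proposition~\ref{prop:3}; it is quoted verbatim from \cite[Prop.~2.1]{KKM}, so there is no in-paper argument to compare against. Your outline is of the same general shape as the original proof in \cite{KKM} (sign analysis of the trace, far-field decay, maximum principle), but several details are off.

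\medskip
\textbf{Sign errors.} With the Cauchy--Riemann relation $v_x=-u_y=-\nu u$ you correctly record, and with $u(0,0)=2\pi+2\int_0^\infty \E^{-\pi k\nu}\,\frac{k\,\D k}{1+k^2}>0$ from \eqref{9}, one gets $v_x(0,0)<0$; hence $v(x,0)<0$ for small $x>0$, not $>0$ as you write. Likewise, the Watson-type expansion of \eqref{vv} near $k=0$ gives $v(x,y)\sim \dfrac{4x}{3\,y^{2}}>0$ for fixed $x>0$ and $y\to-\infty$, the opposite sign to your $-4x/(3y^2)$. Your signs are internally consistent (you are effectively working with $-v$), but they contradict both \eqref{9}--\eqref{10} and Fig.~\ref{fig1}\,(a), so the argument as written does not prove the stated proposition.

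\medskip
\textbf{Genuine gaps.} Two steps need substance, not just acknowledgement. First, you restrict the trace analysis to $(0,\pi)$ and never discuss $v(x,0)$ for $x>\pi$; without showing that $v(x,0)$ has no further zeros there, the curve $C^{+}$ could just as well land on the $x$-axis to the right of the singularity rather than on the $y$-axis. Second, your exclusion of escape to infinity uses only the limit $y\to-\infty$ at fixed $x$; you also need to control the regime $x\to+\infty$ with $y$ bounded (Riemann--Lebesgue gives $v\to 0$, but you need a definite sign, which requires a sharper one-term asymptotic). Finally, the sentence ``uniqueness and simplicity of $x_0$ follow from $v_{xx}+\nu v_y=0$ combined with monotonicity of $v_y(x,0)$'' is not a proof: from \eqref{10} one has $v_y(x,0)=\nu\,v(x,0)+\dfrac{2x}{\pi^{2}-x^{2}}$, which is not monotone on $(0,\pi)$, so the mechanism you invoke does not exist in that form. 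In \cite{KKM} these points are handled via the explicit representation \eqref{10} and a careful sign tracking of $v$ and $v_y$ across the whole of $\RR^2_-$, which is what your sketch is still missing.
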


\begin{figure}[t]
\centering \SetLabels \L (-0.07*0.97) {\small (a)} \\ \L (0.055*0.73) {\small
$v(x,0)$} \\ \L (0.055*0.92) {\small $u (x,0)$} \\ \L (0.98*0.798) {\small $x$} \\
\L (0.98*-0.011) {\small $x$} \\ \L (-0.07*0.6) {\small (b)} \\ \L (-0.02*0.52)
{\small $y$} \\ \L (0.38*0.595) {\small $F_{3/2}'$} \\ \L (0.6*0.595) {\small
$F_{3/2}$} \\ \L (0.37*0.21) {\small $W_{3/2}'$} \\ \L (0.58*0.21) {\small
$W_{3/2}$} \\ \L (0.265*0.12) {\small $B_{3/2}'$} \\ \L (0.43*0.46) {\small
$B_{3/2}'$} \\ \L (0.525*0.42) {\small $B_{3/2}$} \\ \L (0.71*0.12) {\small
$B_{3/2}$} \\
\endSetLabels
\leavevmode \strut\AffixLabels{\includegraphics[width=96mm]{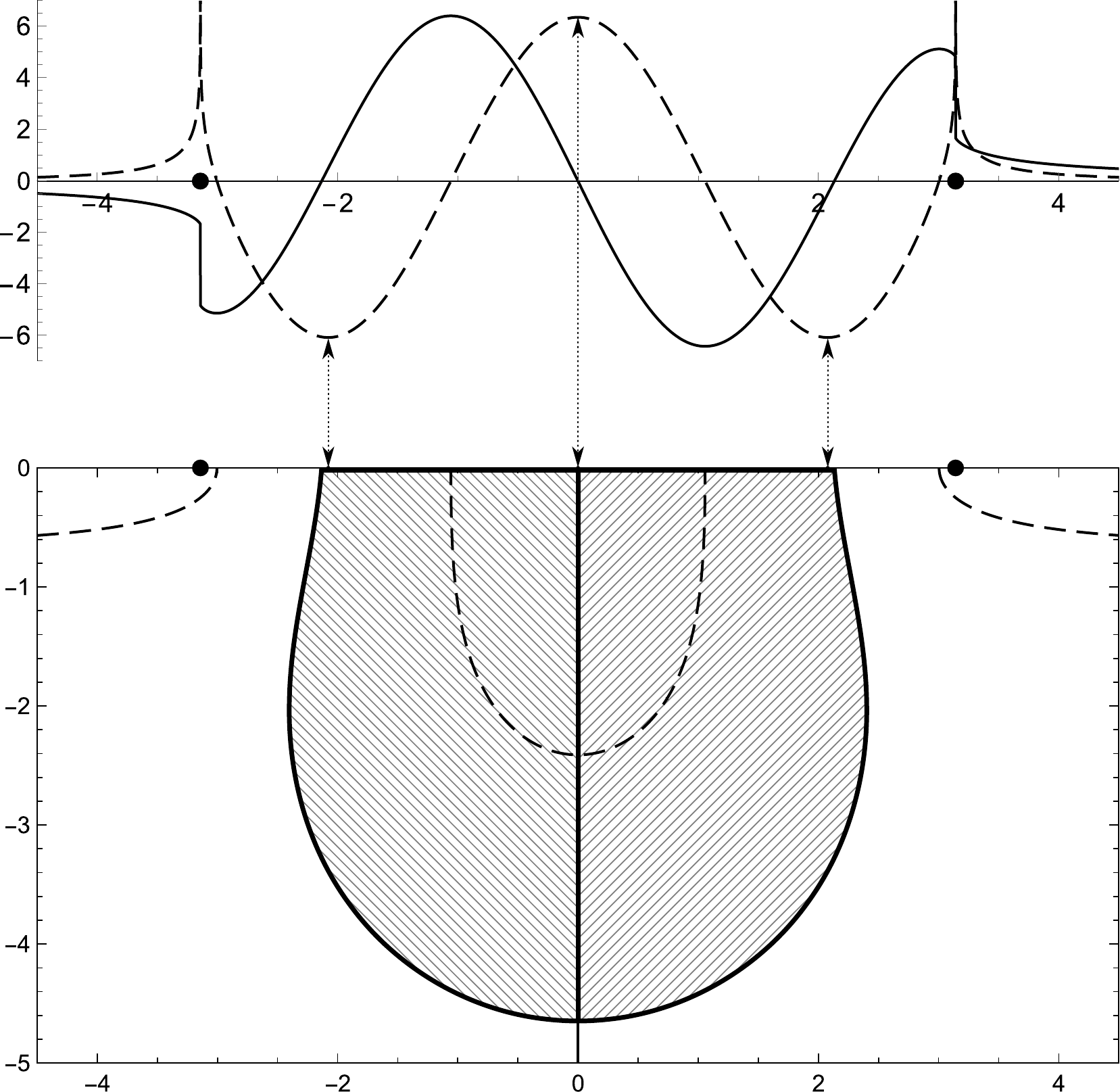}}
\caption{Plotted for $\nu = 3/2$: (a) the traces $u (x, 0)$ (dashed line) and $v (x,
0)$ (solid line); (b) the nodal lines of $u$ (dashed lines) and $v$ (solid lines)
given by \eqref{uu} and \eqref{vv}, respectively. High spots on $\overline{F_{3/2}}$
and $\overline{F_{3/2}'}$ are marked by the arrows connecting them with the extrema
of the velocity potential trace.}
\label{fig1}
\end{figure}

% level v(x,y) = 0
% v(x,0)=0 -> x = 2.132704154399595863454210396669392572569
% v(x,0)=0 -> x = -2.132704154399595863454210396669392572569
% u'(x,0)=0 -> x = 2.077836535963876836772146559998934474576
% u'(x,0)=0 -> x = -2.077836535963876836772146559998934474576

Thus, the right half of the curvilinear nodal line together with a part of the
$y$-axis define the bottom $B_{3/2}$ of a fluid domain; it is denoted $W_{3/2}$ in
Fig.~\ref{fig1}\,(b). Indeed, $v$ given by \eqref{vv} satisfies condition
\eqref{dcv} on this line. On the free surface $F_{3/2}$ of this domain, condition
\eqref{nu} holds for $u$ in view of \eqref{y=0}, and so $u$ and $v$ with $\nu = 3/2$
satisfy the respective versions of the sloshing problem in $W_{3/2}$. 

Notice that we use a half of the domain bounded by the symmetric in $x$ curvilinear
nodal line (let us denote this domain by $\mathring{W}_{3/2}$). The point is that
although the necessary conditions of Proposition~\ref{prop:1} are satisfied for
$\mathring{W}_{3/2}$, the sloshing mode $u$ with $\nu = 3/2$, nevertheless, is not
fundamental in this domain. Indeed, the stream function corresponding to $\nu_1$
cannot have two extrema on the free surface in view of Proposition~\ref{prop:2}. At
the same time, $u$ and $v$ restricted to $W_{3/2}$ satisfy necessary conditions of
Propositions~\ref{prop:1}, \ref{prop:2}, and so they are good candidates for being
the fundamental eigenfunctions corresponding to $\nu = 3/2$.

Indeed, the domain $W_{3/2}$ is nodal for $v$; see the graph of its trace in
Fig.~\ref{fig1}\,(a). Furthermore, the following assertion was proved in
\cite[Th.~2.6]{KKM}.
\begin{theorem}\label{theor1}
In the fluid domain\/ $W_{3/2}$, the sloshing eigenfunction\/ $u$ given by\/
\eqref{uu} with\/ $\nu = 3/2$ has a single nodal line, whose one endpoint is\/
$(x_n,0)$ and\/ $x_n \in (0, x_0)$ (here $x_0$ is the right endpoint of $F_{3/2}$
defined in Proposition~\ref{prop:3}), is the only minimum point of\/ $v (x, 0)$ on\/ $\{ x > 0
\}$, whereas the second endpoint is on the $y$-axis.
\end{theorem}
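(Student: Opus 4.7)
The plan is to exploit the conjugate-harmonic relation between $u$ and $v$ to reduce the nodal-line analysis of $u$ in $W_{3/2}$ to the study of critical points of the trace $v(x,0)$ on $F_{3/2}$. The Cauchy--Riemann equations give $u_y=-v_x$ on $F_{3/2}$, and combined with the free-surface condition $u_y=\nu u$ this yields the key equivalence: for $x\in(0,x_0)$ one has $u(x,0)=0$ if and only if $v_x(x,0)=0$. Hence the zeros of the trace of $u$ on $F_{3/2}$ are precisely the critical points of the trace of $v$ on $F_{3/2}$.

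Next, using Proposition~\ref{prop:3}, I would show that $v$ has constant sign in $W_{3/2}$. Indeed, the only nodal lines of $v$ in $\RR^2_-$ are the $y$-axis and the symmetric curvilinear arc, whose right half (together with a part of the $y$-axis) forms $B_{3/2}$; consequently $W_{3/2}$ contains no point of $N(v)$, so $v$ is of one sign there, and $v(x,0)$ does not vanish on $(0,x_0)$ while $v(0,0)=v(x_0,0)=0$. This forces $v(x,0)$ to attain at least one interior extremum at some $x_n\in(0,x_0)$, and at this point $u(x_n,0)=0$, producing a component of the nodal set of~$u$ entering $W_{3/2}$ from $(x_n,0)$.

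The main obstacle is uniqueness: that $x_n$ is the only critical point of $v(x,0)$ on $(0,x_0)$ and that $u$ has exactly one nodal line in $W_{3/2}$. The cleanest route is to verify that the restrictions of $u$ and $v$ to $W_{3/2}$ realize the fundamental eigenfunctions corresponding to $\nu_1 = 3/2$, which is consistent with the plausibility checks carried out in the paragraphs preceding the theorem. Granted this, Proposition~\ref{prop:2}(ii) immediately gives uniqueness of the extremum of $v(x,0)$, and Proposition~\ref{prop:1} yields a single nodal line of~$u$ with one or both ends on~$F_{3/2}$. If a direct identification with the fundamental mode is not available, I would argue by contradiction: a second extremum of $v(x,0)$ would produce a second zero of~$u$ on~$F_{3/2}$, and hence an additional nodal component; careful bookkeeping of nodal domains (combined with the explicit representations \eqref{uu}--\eqref{vv}, the symmetries, and the fact that $v$ has no further nodal lines by Proposition~\ref{prop:3}) should rule this out.

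Finally, to locate the second endpoint of the nodal line, I would track a connected component of $\{u=0\}\cap W_{3/2}$ starting at $(x_n,0)$. By the uniqueness above it cannot return to $F_{3/2}$, and it cannot terminate on the curvilinear part of~$B_{3/2}$: a common zero of $u$ and $v$ there, together with the Cauchy--Riemann equations, would force an additional nodal arc of~$v$ crossing $B_{3/2}$ transversally, contradicting Proposition~\ref{prop:3}. Hence the line must terminate on the $y$-axis portion of~$B_{3/2}$, which is also consistent with the reflection symmetry $u(-x,y)=u(x,y)$ that makes the nodal line meet the $y$-axis perpendicularly.
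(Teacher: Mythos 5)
The paper does not actually prove this statement: it is quoted from \cite{KKM} (Theorem~2.6 there), so there is no in-paper argument to compare yours against, and your proposal must stand on its own. Its skeleton is reasonable: the Cauchy--Riemann identity $\nu u(x,0)=u_y(x,0)=-v_x(x,0)$ correctly identifies zeros of $u(\cdot,0)$ on $F_{3/2}$ with critical points of $v(\cdot,0)$, and the Rolle-type argument ($v(0,0)=v(x_0,0)=0$ with $v\neq0$ in between) does yield an interior critical point $x_n$. But your key steps have genuine gaps. First, uniqueness: Proposition~\ref{prop:2}(ii) applies only to a stream eigenfunction corresponding to $\nu_1$, and the paper deliberately stops short of asserting that $3/2=\nu_1(W_{3/2})$\,---\,it only calls $u,v$ ``good candidates.'' Invoking Proposition~\ref{prop:2} therefore presupposes a converse Courant-type statement (a sign-definite stream eigenfunction is necessarily fundamental) that you neither prove nor cite, and your fallback (``careful bookkeeping \dots should rule this out'') is not an argument. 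You also never address the claim actually made in the theorem, that $x_n$ is the \emph{only minimum} of $v(\cdot,0)$ on all of $\{x>0\}$, not merely an extremum on $(0,x_0)$; that requires working with the explicit representation of the trace.

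Second, your exclusion of the curvilinear part of $B_{3/2}$ as a terminal point of the nodal line is incorrect as stated. If $u$ and $v$ have a common zero $p$ on that arc and $f=u+\ii v$ has a simple zero there, the nodal sets of $u$ and $v$ near $p$ are single smooth arcs crossing orthogonally; no ``additional nodal arc of $v$'' is created, so Proposition~\ref{prop:3} is not contradicted and your argument proves nothing. To rule this case out one must show $u\neq0$ on the curvilinear arc: since $v\equiv0$ there, the Cauchy--Riemann equations give $\partial u/\partial n=0$ along it, hence $u$ is strictly monotone along the arc away from the stagnation point, and one then needs the sign of $u$ at $(x_0,0)$ (negative, as used in the proof of Proposition~\ref{prop:4}) and at the stagnation point; only after that does an intermediate-value argument on the $y$-axis segment (where $u(0,0)>0$ by Remark~\ref{rem1}) actually locate the second endpoint. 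These quantitative facts about \eqref{uu}--\eqref{vv} are exactly what your sketch never engages with.
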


\begin{remark}\label{rem1}
{\rm The representation
\begin{equation}
u (x,0) = 2 \pi \cos \nu x + \int_0^\infty \left[ \E^{-(\pi -x) k \nu} + \E^{-(\pi +
x) k \nu} \right] \frac{k \, \D k}{1+k^2} \label{9}
\end{equation}
valid for $x \in [0, \pi)$ (see \cite[Form. (2.9)]{KKM}) implies that $u_x (0,0) =
0$ and $u_{xx} (0,0) < 0$. Hence $u (x, 0)$ attains maximum at $x = 0$, and so the
origin is a high spot, but located on the boundary of the free surface $F_{3/2}$.}
\end{remark}

The following assertion implies that there is an interior high spot on $F_{3/2}$; it
corresponds to the right minimum of $u (x, 0)$, which is close to the right endpoint
of the free surface $F_{3/2}$; see Remark~\ref{remark:2} below.

\begin{theorem}\label{theor2}
The sloshing eigenfunction\/ $u$ given by\/ \eqref{uu} with\/ $\nu =
3/2$ has an interior high spot on $F_{3/2}$.
\end{theorem}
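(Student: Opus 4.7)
The plan is to combine the sign information for $u(x,0)$ on $F_{3/2}$ supplied by Theorem~\ref{theor1} and Remark~\ref{rem1} with an explicit monotonicity estimate for $u(\cdot,0)$ read off from the representation \eqref{9}. By Theorem~\ref{theor1}, the unique nodal line of $u$ in $W_{3/2}$ meets $F_{3/2}$ only at $(x_n,0)$ with $x_n\in(0,x_0)$, so $u(x,0)\neq 0$ on $(x_n,x_0]$. Since $u(0,0)>0$ by Remark~\ref{rem1}, continuity forces $u(x,0)>0$ on $[0,x_n)$ and $u(x,0)<0$ on $(x_n,x_0]$; in particular $u(x_0,0)<0$. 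The continuous function $u(\cdot,0)$ attains its minimum on $[x_n,x_0]$ at some point $x_m$, and it remains only to exclude $x_m=x_0$.

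To do so I would differentiate \eqref{9} termwise, obtaining
\[
u_x(x,0)=-2\pi\nu\sin(\nu x)+I'(x),\qquad I'(x)=\nu\int_0^\infty\bigl[\E^{-(\pi-x)k\nu}-\E^{-(\pi+x)k\nu}\bigr]\frac{k^2\,\D k}{1+k^2}.
\]
Since $\pi-x<\pi+x$ for $x>0$, the integrand of $I'(x)$ is pointwise positive, so $I'(x)>0$ on $(0,\pi)$. For $x\in(2\pi/3,\pi)$ one has $\nu x\in(\pi,3\pi/2)$, so $\sin(\nu x)\leq 0$; both summands in $u_x(x,0)$ are then nonnegative with $I'(x)$ strictly positive, and hence $u(\cdot,0)$ is strictly increasing on $(2\pi/3,\pi)$. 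Granted the geometric fact $x_0>2\pi/3$ (see next paragraph), the minimum on $[x_n,x_0]$ cannot lie in $(2\pi/3,x_0]$; and it cannot lie at $x_n$ either, since $u(x_n,0)=0$ while $u(x,0)<0$ for $x$ slightly larger than $x_n$. Hence $x_m\in(x_n,2\pi/3]\subset(0,x_0)$, which is the desired interior high spot.

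The principal obstacle is the inequality $x_0>2\pi/3$. By Proposition~\ref{prop:3}, $x_0$ is the smallest positive zero of $v(x,0)$, so this reduces to showing $v(x,0)\neq 0$ on $(0,2\pi/3]$. My approach would be to derive from \eqref{vv} a companion representation of $v(x,0)$ analogous to \eqref{9}, consisting of a trigonometric main term of definite sign on $(0,2\pi/3]$ plus an exponentially decaying correction, and to bound the correction by a fraction of the main term. An alternative route works at the corner $(x_0,0)$ directly: since $u$ is smooth there, compatibility of $\partial u/\partial n=0$ on $B_{3/2}$ with $u_y=\nu u$ on $F_{3/2}$ forces $u_x(x_0,0)=\nu u(x_0,0)\cot\theta$, where $\theta\in(-\pi,0)$ is the tangent angle of the nodal curve of $v$ at $(x_0,0)$; extracting $\theta\in(-\pi/2,0)$ from \eqref{vv} would then give $u_x(x_0,0)>0$ directly, again excluding $x_m=x_0$.
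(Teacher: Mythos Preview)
Your overall strategy---show that the minimum of $u(\cdot,0)$ on $[x_n,x_0]$ cannot sit at the endpoint $x_0$ by proving $u_x(x_0,0)>0$---is exactly the paper's, but the paper reaches $u_x(x_0,0)>0$ in one line and your detour through \eqref{9} is unnecessary. Differentiate the representation \eqref{10} for $v$ with respect to $y$ at $y=0$: since the integral vanishes at $y=0$ and its $y$-derivative there equals $2x/(\pi^2-x^2)$, one gets
\[
v_y(x,0)=\nu\,v(x,0)+\frac{2x}{\pi^2-x^2}\,.
\]
At $x=x_0$ the definition of $x_0$ gives $v(x_0,0)=0$, so by Cauchy--Riemann
\[
u_x(x_0,0)=v_y(x_0,0)=\frac{2x_0}{\pi^2-x_0^2}>0,
\]
which immediately rules out $x_m=x_0$. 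No information about the size of $x_0$ is needed.

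Your route via \eqref{9} is correct as far as it goes: the computation $u_x(x,0)=-2\pi\nu\sin(\nu x)+I'(x)$ with $I'(x)>0$ is right, and it does give $u_x>0$ on $(2\pi/3,\pi)$. But this manufactures the obstacle $x_0>2\pi/3$, which you then have to attack separately; numerically the margin is only about $0.04$, so the estimate you propose for $v(x,0)$ would have to be fairly sharp. Your ``alternative route'' at the corner is essentially the paper's argument---the tangent angle $\theta$ of the nodal curve of $v$ at $(x_0,0)$ is encoded in $v_y(x_0,0)/v_x(x_0,0)$, and \eqref{10} hands you $v_y(x_0,0)$ directly. So rather than estimating on an interval, just evaluate at the single point $x_0$ using \eqref{10}; the explicit formula does all the work.
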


\begin{proof}
The representation (see \cite[Form.\ (2.7)]{KKM})
\begin{equation}
v (x,y) = \E^{\nu y} \left[ v (x,0) + 2x \int_y^0 \frac{k^2 -
(\pi^2-x^2)}{[k^2+(\pi-x)^2]\,[k^2+(\pi+x)^2]} \, \E^{-k\nu}\,\D k \right]
\label{10}
\end{equation}
implies that 
\[ v_y (x_0, 0) = u_x (x_0, 0) = \frac{2 \, x_0 \, (\pi^2 - x^2_0)}{(\pi - x_0)^2
\, (\pi + x_0)^2} = \frac{2 \, x_0}{\pi^2 - x^2_0} > 0 \, .
\]
Then $u_x (x_h, 0) = 0$ at some $x_h < x_0$ in view of Remark~\ref{rem1}. Hence, an
interior high spot is located at $(x_h, 0)$ on the left of the endpoint $(x_0, 0)$.
By symmetry, $(-x_h, 0)$ is also an interior high spot located on the right of the
endpoint $(-x_0, 0)$.
\end{proof}

\begin{remark}\label{remark:2}
{\rm According to computations, $x_h \approx 2.077836$, whereas the endpoint of the
free surface $F_{3/2}$ is at $(x_0, 0)$ with $x_0 \approx 2.132704$, that is, the
distance from the high spot to the endpoint is approximately $0.054868$, which is
less than 3\,\% of the distance from the origin to the endpoint of $F_{3/2}$.}
\end{remark}

%2.132704154399595863454210396669392572569-2.077836535963876836772146559998934474576

% = 5.486761843571886e-02
 
\begin{remark}\label{remark:3}
{\rm The same considerations are valid for the domain $W_{3/2}'$ on the left of the
$y$-axis; see Fig.~\ref{fig1}\,(b). Therefore, $W_{3/2}'$ also provides an example
of domain with an interior high spot.}
\end{remark}

Now we turn to a geometric property intrinsic to domains with interior high spots.
According to the following definition, it is evidently holds for $W_{3/2}$; see
Fig.~\ref{fig1}\,(b).

\begin{definition}
{\rm A fluid domain $W$ satisfies John's condition if it is confined to the strip
bounded by the straight vertical lines through the endpoints of the free
surface~$F$. Domains violating this condition are called bulbous.}
\end{definition}

\begin{proposition}\label{prop:4}
The domain $W_{3/2}$ is bulbous.
\end{proposition}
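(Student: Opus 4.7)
The plan is to exhibit a point of $W_{3/2}$ whose $x$-coordinate equals $x_0$; since $W_{3/2}$ is open, this will immediately yield points in $W_{3/2}$ with $x>x_0$, contradicting John's condition. My candidate is $(x_0,-\epsilon)$ for small $\epsilon>0$, and I plan to locate it by reading off the sign of the extended stream function $v$ from \eqref{vv}.

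I would first determine the sign of $v$ inside $W_{3/2}$. Since $(u,v)$ are conjugate harmonic, the Cauchy--Riemann equations give $v_x=-u_y$; combined with the extended free-surface condition \eqref{y=0}, this yields $v_x(x,0)=-\nu u(x,0)$ along $y=0$. Formula \eqref{9} at $x=0$ makes $u(0,0)>0$ manifest, hence $v_x(0,0)<0$; since $v(0,0)=0$ by antisymmetry of $v$ in $x$, it follows that $v(x,0)<0$ for small $x>0$. Proposition~\ref{prop:2}\,(ii) then extends this to $v(\cdot,0)<0$ throughout $F_{3/2}$, and Proposition~\ref{prop:2}\,(i) (equivalently, the maximum principle applied in $W_{3/2}$, whose remaining boundary pieces carry $v=0$) gives $v<0$ throughout $W_{3/2}$. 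Because the curvilinear nodal line of Proposition~\ref{prop:3} is smooth, $v$ changes sign across it, so $v>0$ in the region $R$ on its opposite side within the quadrant $\{x>0,\,y<0\}$.

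Next I would evaluate $v(x_0,-\epsilon)$. The proof of Theorem~\ref{theor2} already furnishes $v_y(x_0,0)=u_x(x_0,0)=2x_0/(\pi^2-x_0^2)>0$, so combined with $v(x_0,0)=0$, a first-order Taylor expansion gives $v(x_0,-\epsilon)=-\epsilon\,v_y(x_0,0)+O(\epsilon^2)<0$ for all sufficiently small $\epsilon>0$. Since the quadrant $\{x>0,\,y<0\}$ is partitioned by the curvilinear nodal line into exactly the two nodal subdomains $W_{3/2}$ and $R$ distinguished by the sign of $v$, the point $(x_0,-\epsilon)$ must lie in $W_{3/2}$. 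Openness of $W_{3/2}$ then yields a neighborhood of $(x_0,-\epsilon)$ inside $W_{3/2}$ containing points with $x>x_0$, and bulbousness follows.

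The main obstacle is the chain of sign identifications in the first step, running from $u(0,0)>0$ down to $v>0$ in $R$; once that chain is set up, step two reduces to the single Taylor coefficient already computed in Theorem~\ref{theor2}.
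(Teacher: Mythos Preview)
Your argument is correct in spirit and lands on essentially the same local computation as the paper: both proofs hinge on the single derivative $v_y(x_0,0)=2x_0/(\pi^2-x_0^2)>0$ taken from formula~\eqref{10}. The paper packages this as an implicit-function-theorem slope $y'(x_0)=-v_x(x_0,0)/v_y(x_0,0)$ and determines the sign of $v_x(x_0,0)=-\nu u(x_0,0)$ via Theorem~\ref{theor1} (which forces $u(x_0,0)<0$). You instead Taylor-expand $v(x_0,-\epsilon)$ and identify ``which side is $W_{3/2}$'' through the global sign of $v$, traced back to $u(0,0)>0$ from~\eqref{9}. These are two orientations of the same tangent-line argument.

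One point needs tightening. Your appeal to Proposition~\ref{prop:2} is not legitimate as stated: that proposition concerns the stream function attached to $\nu_1$, and the paper explicitly stops short of proving that $3/2=\nu_1(W_{3/2})$ (it only says $u,v$ ``are good candidates''). Fortunately you do not need it. Proposition~\ref{prop:3} already gives that the curvilinear arc and the negative $y$-axis are the \emph{only} zeros of $v$ in $\RR^2_-$, so $v$ has constant sign on the open set $W_{3/2}$; your computation $v_x(0,0)<0$, $v(0,0)=0$ then pins that sign as negative by continuity up to the boundary. Likewise, $v>0$ in $R$ follows because $v$ changes sign across the smooth nodal arc. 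With that substitution your chain is airtight and matches the paper's level of rigor.
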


\begin{proof}
To be specific, let us show that $W_{3/2}$ is bulbous on the right-hand side. Let us
consider $B_{3/2}$ as the graph of the implicit function $x \mapsto y$ defined by
the equation $v (x, y) = 0$ in a neighbourhood of $(x_0, 0)$\,---\,the right
endpoint of $B_{3/2}$. Therefore, to establish that $W_{3/2}$ is bulbous it is
sufficient to prove that 
\[ y'(x_0) = -v_x (x_0, 0)/v_y (x_0, 0) < 0 \, . \]

After some algebra, it follows from \eqref{10} that
\begin{equation*}
y'(x_0) = \frac{x_0^2 - \pi^2}{2 \, x_0} \, v_x (x_0, 0) = \frac{\pi^2 - x_0^2}{2 \,
x_0} \, u_y (x_0, 0) \, ,
\end{equation*}
where the last equality is a consequence of the Cauchy--Riemann equations. In view
of condition \eqref{y=0} we have for $\nu = 3/2$:
\begin{equation}
y' (x_ 0) = \frac{3 (\pi^2 - x_0^2)}{4 \, x_0} \, u(x_0, 0) \, .
\label{eq:y'}
\end{equation}
Since there is only one nodal line of $u$ in $W_{3/2}$, and its right endpoint is
$(x_n,0)$, the trace $u(x,0)$ is negative for $x \in (x_n,x_0]$. Then, \eqref{eq:y'}
implies that $y' (x_0) < 0$, which completes the proof.
\end{proof}

\begin{figure}[t]
\centering \SetLabels \L (-0.09*0.98) {\small (a)} \\ \L (0.21*0.795) {\small
$v(x,0)$} \\ \L (0.38*0.97) {\small $u (x,0)$} \\ \L (0.94*0.847) {\small $x$} \\ \L
(0.94*-0.01) {\small $x$} \\ \L (-0.09*0.67) {\small (b)} \\ \L (-0.01*0.615)
{\small $y$} \\ \L (0.4*0.69) {\small $F_{5/2}$} \\ \L (0.19*0.52) {\small
$B_{5/2}$} \\  \L (0.33*0.28) {\small $W_{5/2}$} \\ \L (0.515*0.17) {\small
$B_{5/2}$} \\
\endSetLabels
\leavevmode \strut\AffixLabels{\includegraphics[width=75mm]{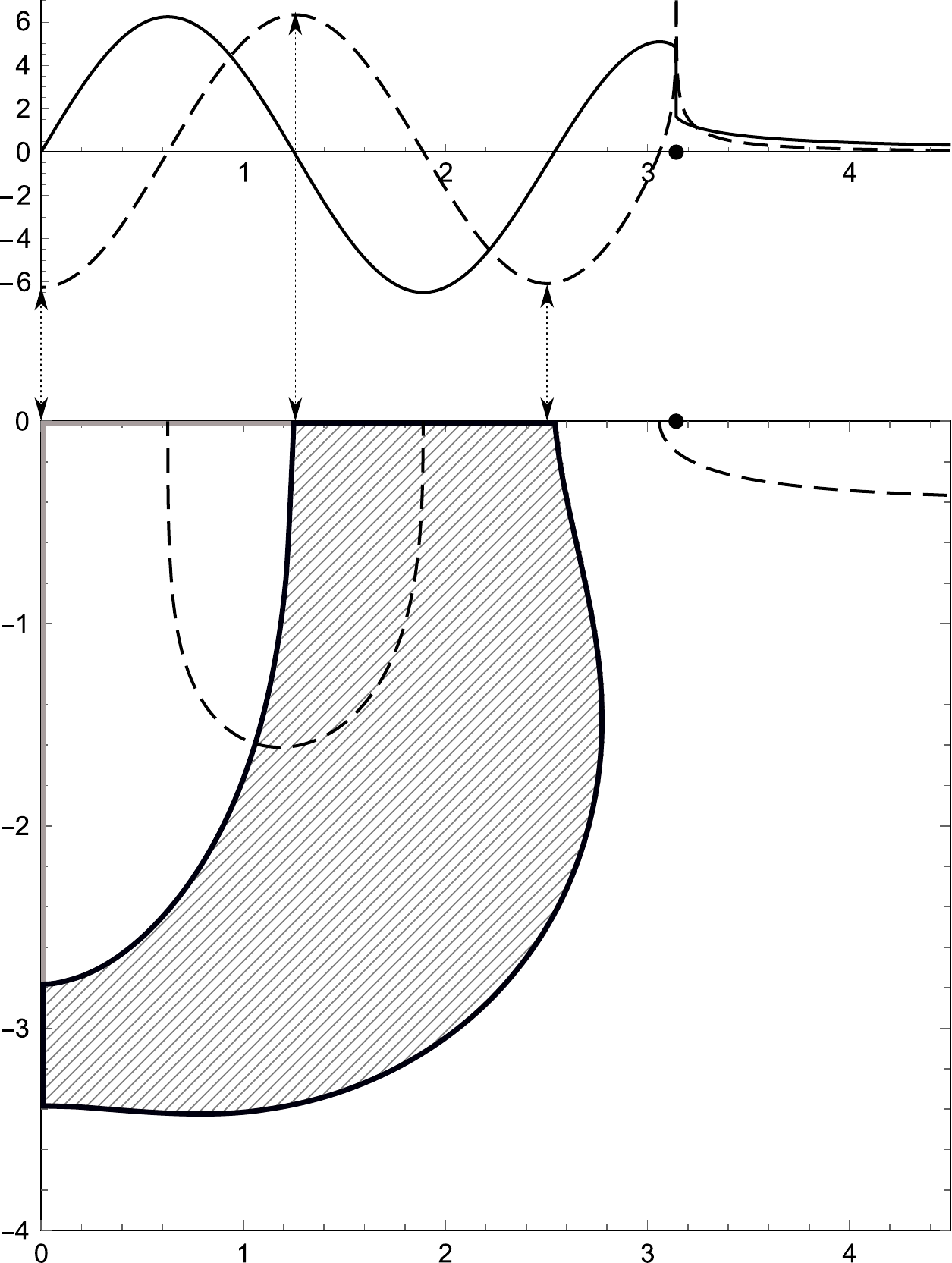}}
\caption{Plotted for $\nu = 5/2$: (a) the traces $u (x, 0)$ (dashed line) and $v (x,
0)$ (solid line); (b) the nodal lines of $u$ (dashed lines) and $v$ (solid lines)
given by \eqref{uu} and \eqref{vv}, respectively. High spots on $F_{5/2}$ are marked
by the arrows connecting them with the extrema of the velocity potential trace.}
\label{fig2.5}
\end{figure}

% level v(x,y) = 0
% v(x,0)=0 -> x = 1.249757220120083848246623972534590159857
% v(x,0)=0 -> x = 2.539769077351033245491249630131013851437
% u'(x,0)=0 -> x = 1.257428561463745720203847026121942070741
% u'(x,0)=0 -> x = 2.503159365209707257358498936053651897808

In conclusion of this section, it is worth mentioning an essential property of the
harmonic function $v$ defining the domain $W_{3/2}$.\,\ This function has a
stagnation point in $\RR^2_-$, namely, the point, where the curvilinear part of
$B_{3/2}$ meets the negative \mbox{$y$-axis}. Indeed, the derivatives along both
these lines vanish because they are nodal lines of $v$. In what follows, the same
property\,---\,the presence of a stagnation point\,---\,will be used in more
complicated cases.

\begin{remark}\label{remark:4}{\rm
It should be said, that the presence of stagnation points of~$v$ and, as a
consequence, of corner points on boundaries of all domains constructed in the paper,
is rather a matter of convenience for our considerations. For instance, it is easy
to observe that any line $v=-c$ in Fig.~\ref{fig2.5} is located inside $W_{3/2}$
provided $0<c<-\min\{v(x,0):x\geq0\}$, and the domain bounded by this line, say
$W^c_{3/2}$, has a smooth bottom. However, $W^c_{3/2}$ can be used as a sloshing
domain instead of~$W_{3/2}$; moreover, the extrema of $u$ are still located inside
the free surface $F^c_{3/2}$ when $c$ is sufficiently small.}
\end{remark}

\subsection{The case $\bm{\nu = 5/2}$; multiple interior high spots}

Rigorous considerations, analogous to those in Sect.\ 3.1, are applicable to the
case when $u$ and $v$ are given by \eqref{uu} and \eqref{vv}, respectively, but with
$\nu = 5/2$. It occurs that the domain similar to $W_{3/2}$ (that is, adjacent to
the $y$-axis), in which this~$u$ is an eigenfunction corresponding to
$\nu = 5/2$, has no interior high spot. Indeed, it satisfies John's condition which
guarantees \cite{KK1} that high spots are on the boundary of the free surface; see
Fig.~\ref{fig2.5}\,(b), where this domain left blank. However, this $u$ is also an eigenfunction corresponding to $\nu = 5/2$ in the domain denoted
by~$W_{5/2}$ in Fig.~\ref{fig2.5}\,(b). This follows from Proposition~\ref{prop:2} because this
domain is nodal for $v$; indeed, the negative $y$-axis is a nodal line of $v$, as
well as both lines marked $B_{5/2}$\,---\,the two curved parts forming the bottom of
$W_{5/2}$.

There are two interior high spots on the free surface $F_{5/2}$; one of them
corresponds to the maximum of $u (x, 0)$ attained at $x \approx 1.257429$ between
the endpoints of the nodal line of $u$, which divides $W_{5/2}$ into two nodal
domains. It should be noted that the left line $B_{5/2}$ emanates from $x \approx
1.249757$, which is slightly smaller. The second high spot corresponds to the
minimum of $u (x, 0)$ attained at $x \approx 2.503159$, which is very close to the
right endpoint of $F_{5/2}$ at $x \approx 2.539769$.

It is clear that the domain $W_{5/2}$ is bulbous on both sides. It is obvious on the
left-hand side, whereas for the right-hand side the reasoning similar to the proof
of Proposition~\ref{prop:4} is applicable. Of course, the domain obtained by reflection of
$W_{5/2}$ in the $y$-axis also provides an example of domain with two interior high
spots.

As in the case $\nu = 3/2$, the function $v$ defining the domain $W_{5/2}$ has
stagnation points in $\RR^2_-$. However, there are two of them, namely, the points,
where both curves $B_{5/2}$ meet the negative $y$-axis.

\section{Further examples of domains with multiple interior high spots (numerical
results)}

In this section, we present other domains with multiple interior high spots, but
they are obtained numerically using the following procedure. For a specified
value of $\nu$ the bottom is defined by a nonzero level line of $v$, whose level is
chosen so that this line has two branches crossing transversally at a stagnation
point, thus forming the bottom of the required domain.

This procedure is applicable for integer values of $\nu$ as well. However, the
following functions
\begin{align}
 u (x,y) &= \int_0^\infty \frac{\cos k (x-\pi) - \cos k (x+\pi)} {k-\nu} \, \E^{k
y} \,\D k \, , \label{uu'} \\
 v (x,y) &= \int_0^\infty \frac{\sin k(x-\pi) - \sin k(x+\pi)} {\nu - k} \, \E^{k y} 
\, \D k \label{vv'}
\end{align}
are used in this case instead of \eqref{uu} and \eqref{vv}.

\begin{figure}[t!]
\centering \SetLabels \L (-0.09*0.98) {\small (a)} \\ \L (0.137*0.95) {\small
$\upsilon(x,0)$} \\ \L (0.48*0.95) {\small $u (x,0)$} \\ \L (0.94*0.82) {\small $x$}
\\ \L (0.94*-0.01) {\small $x$} \\ \L (-0.09*0.61) {\small (b)} \\ \L (-0.01*0.54)
{\small $y$} \\ \L (0.465*0.62) {\small $F_{7/2}$} \\ \L (0.46*0.28) {\small
$W_{7/2}$} \\ \L (0.265*0.36) {\small $B_{7/2}$} \\ \L (0.67*0.47) {\small
$B_{7/2}$} \\
\endSetLabels
\leavevmode \strut\AffixLabels{\includegraphics[width=75mm]{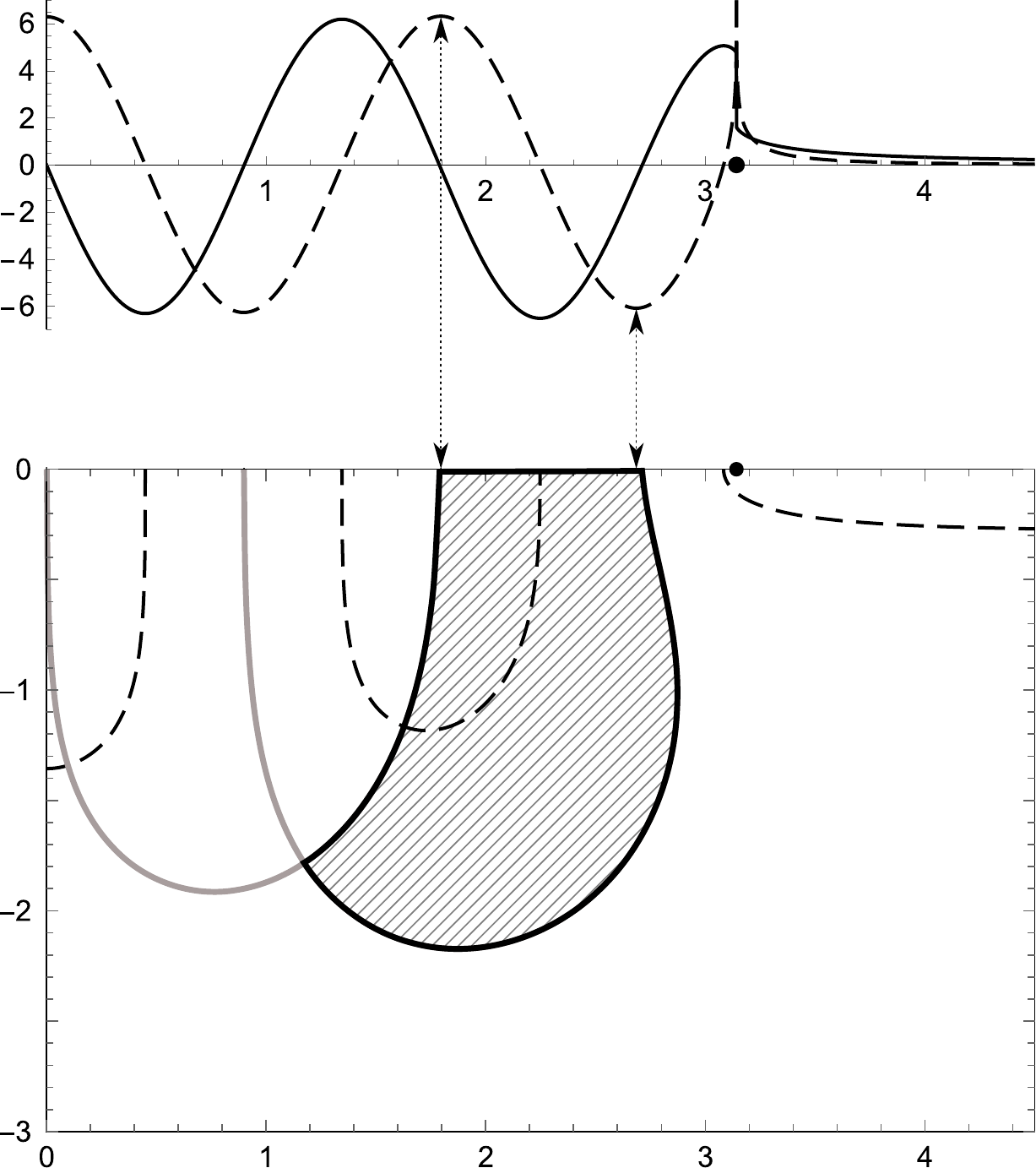}}
\caption{Plotted for $\nu = 7/2$: (a) the traces $u (x, 0)$ (dashed line) and $v (x,
0)$ (solid line); (b) the nodal lines of $u$ (dashed lines), and the level lines $v
\approx -0.023145$ (solid lines). Two interior high spots on $F_{7/2}$ are marked by
arrows connecting them with the corresponding extrema of the velocity potential's
trace.}
\label{fig3.5}
\end{figure}

% level v(x,y) = L = -0.02314529747972014742048241245892
% v(x,0)=L -> x = 0.8989920448244100670029500144134336220568
% v(x,0)=L -> x = 1.789875056622694119057277647166361891108
% v(x,0)=L -> x = 2.711624917626438350073803706686725973751
% u'(x,0)=0 -> x = 0.8974669123267981670072752385544554482118
% u'(x,0)=0 -> x = 1.795807033772918812062153093041791859068
% u'(x,0)=0 -> x = 2.685549268210271006113276367559630180637
%
% 2.711624917626438350073803706686725973751-2.685549268210271006113276367559630180637 = 2.607564941616758e-02

\subsection{The case $\bm{\nu = 7/2}$}

If $v$ is given by \eqref{vv} with $\nu = 7/2$, then a stagnation point occurs at
the level approximately equal to $-0.023145$; see Fig.~\ref{fig3.5}\,(b), where this
point is at the intersection of two solid lines marked $B_{7/2}$. They enclose the
domain $W_{7/2}$, in which $u$ given by \eqref{uu} with $\nu = 7/2$ is an
eigenfunction corresponding to $\nu = 7/2$. Indeed, since $v$ is less than
$-0.023145$ in this domain, it is the nodal one for the stream function equal to the
difference of $v$ and this value.

There are two interior high spots on $F_{7/2}$: near its left endpoint at $x \approx
1.795807$, and close to the right endpoint at $x \approx 2.685549$, within
approximately $0.026076$ from the endpoint. The character of these high spots is the
same as those on $F_{5/2}$. Like $W_{5/2}$, the domain $W_{7/2}$ is bulbous on both
sides. Of course, the domain obtained by reflection of $W_{7/2}$ in the $y$-axis
also provides an example of domain with two interior high spots.

\begin{figure}[t]
\centering \SetLabels \L (-0.09*0.98) {\small (a)} \\ \L (0.07*0.95) {\small
$v(x,0)$} \\ \L (0.44*0.95) {\small $u (x,0)$} \\ \L (0.94*0.82) {\small $x$} \\ \L
(0.94*-0.01) {\small $x$} \\ \L (-0.09*0.61) {\small (b)} \\ \L (-0.01*0.54) {\small
$y$} \\ \L (0.45*0.62) {\small $F_{3}$} \\ \L (0.43*0.28) {\small $W_{3}$} \\ \L
(0.3*0.47) {\small $B_{3}$} \\ \L (0.64*0.46) {\small $B_{3}$} \\
\endSetLabels
\leavevmode \strut\AffixLabels{\includegraphics[width=75mm]{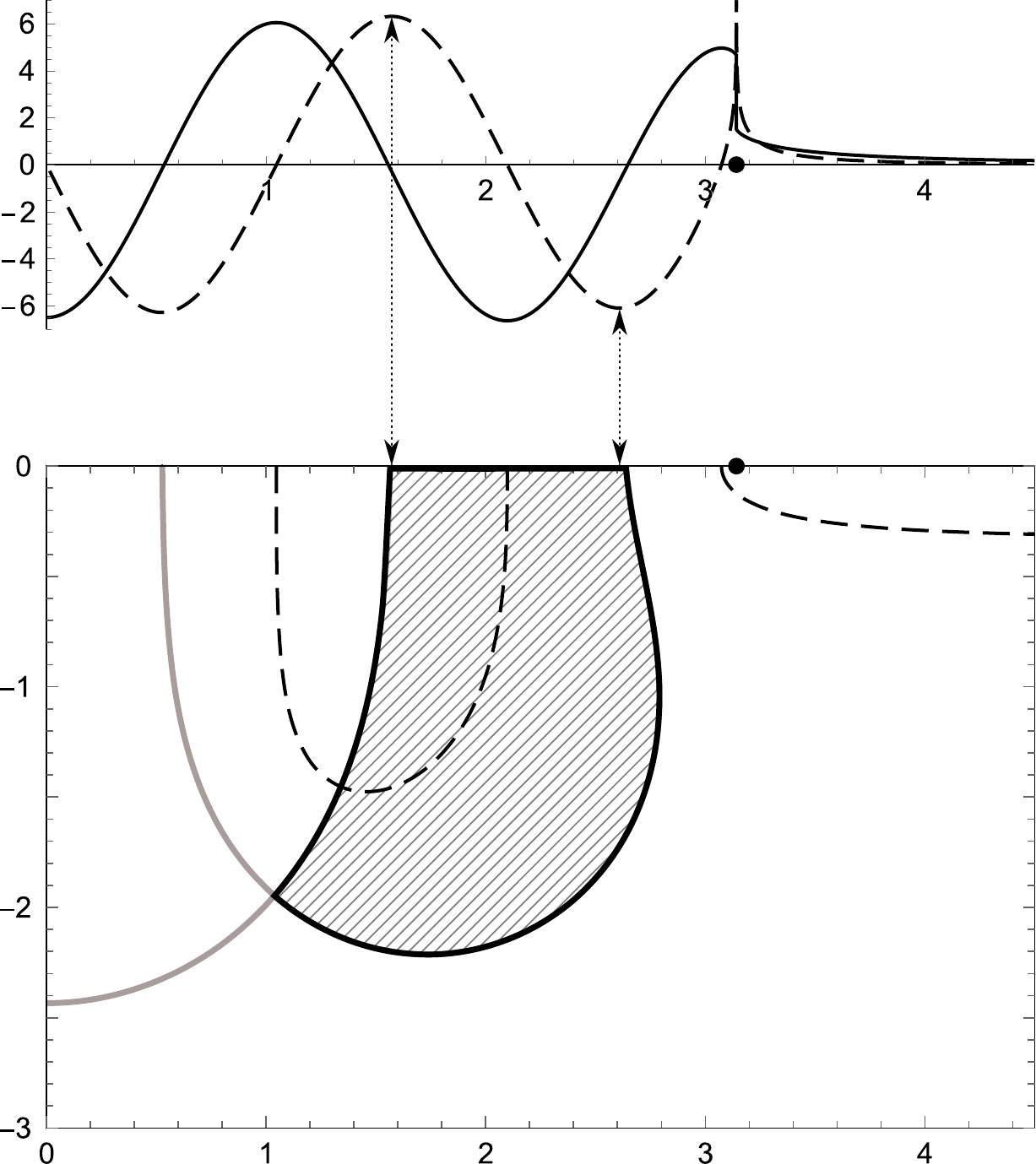}} \caption{Plotted
for $\nu = 3$: (a) the traces $u (x, 0)$ (dashed line) and $v (x, 0)$ (solid line)
given by \eqref{uu} and \eqref{vv}, respectively; (b) the nodal lines of $u$ (dashed
lines), and the level lines $v \approx -0.150899$ (solid and dotted lines). Interior high
spots on $F_{3}$ are marked by arrows connecting them with extrema of the velocity
potential trace.}
\label{fig3}
\end{figure}

% level v(x,y) = L = -0.150898989442689409579416272408943
% v(x,0)=L -> x = 0.5269149677310326164345575051031490551741
% v(x,0)=L -> x = 1.564590666820475939321059563139406690311
% v(x,0)=L -> x = 2.638760519941410015503545343830245005813
% u'(x,0)=0 -> x = 0.5233370909911456916228864029492278251568
% u'(x,0)=0 -> x = 1.571564853203131123708977515278781843381
% u'(x,0)=0 -> x = 2.609510933110594235587991180103155868382

\begin{figure}[t]
\centering \SetLabels \L (-0.09*0.98) {\small (a)} \\ \L (0.115*0.795) {\small
$v(x,0)$} \\ \L (0.3*0.97) {\small $u (x,0)$} \\ \L (0.94*0.847) {\small $x$} \\ \L
(0.94*-0.01) {\small $x$} \\ \L (-0.09*0.69) {\small (b)} \\ \L (-0.01*0.615)
{\small $y$} \\ \L (0.36*0.685) {\small $F_{2}$} \\ \L (0.38*0.34) {\small $W_{2}$}
\\ \L (0.525*0.21) {\small $B_{2}$} \\ \L (0.12*0.5) {\small $B_{2}$} \\
\endSetLabels
\leavevmode \strut\AffixLabels{\includegraphics[width=75mm]{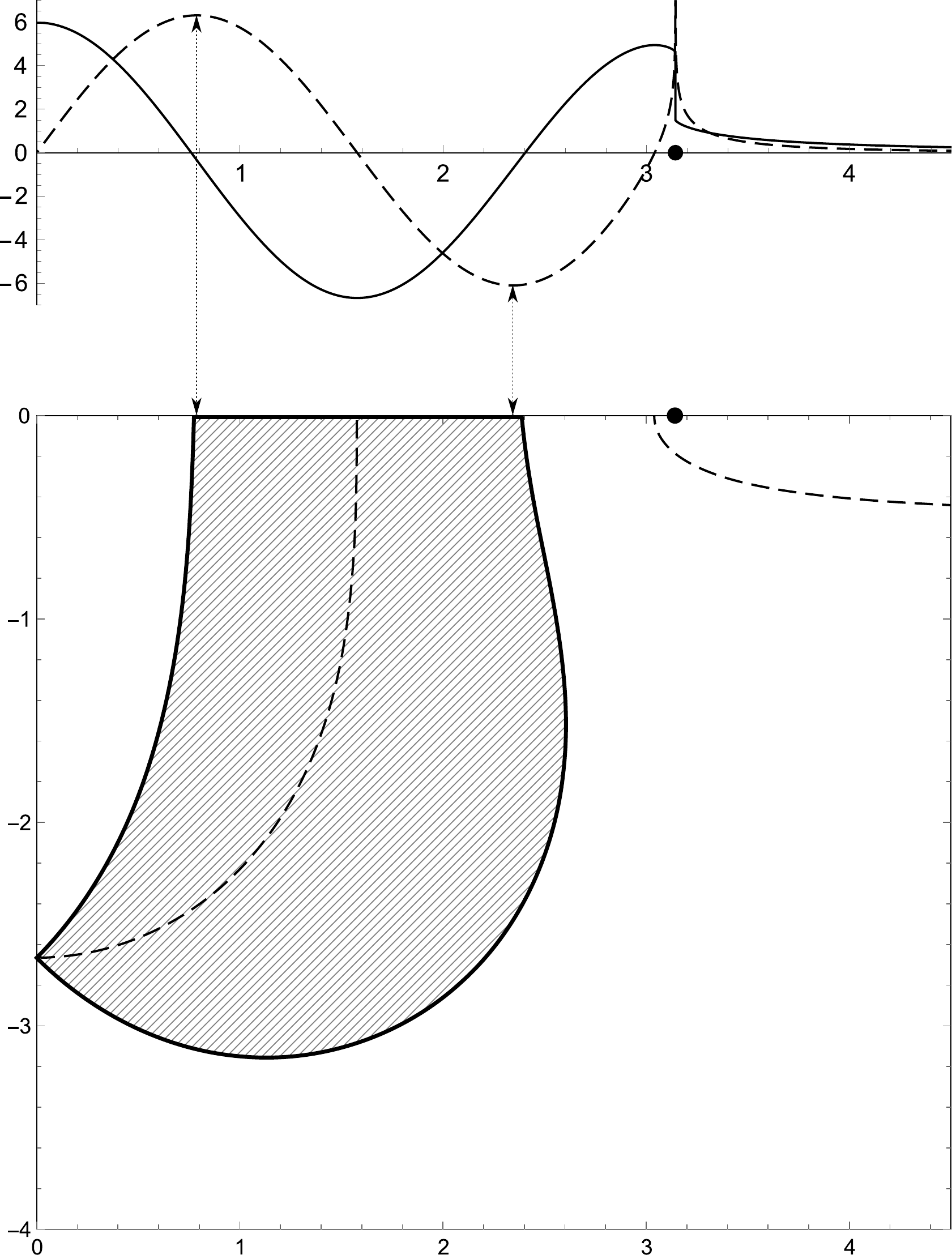}} \caption{Plotted
for $\nu = 2$: (a) the traces $u (x, 0)$ (dashed line) and $v (x, 0)$ (solid line);
(b) the nodal lines of $u$ (dashed lines) and the level line $v \approx -0.185125$
(solid line). Interior high spots on $F_{2}$ are marked by arrows connecting them
with extrema of the velocity potential trace.}
\label{fig2}
\end{figure}

% level v(x,y) = L = -0.185124858312127173079751925708207045934727
% v(x,0)=L -> x = 0.7745296058758907052065733759784613580175
% v(x,0)=L -> x = 2.387142646476304344177214401504730549766
% u'(x,0)=0 -> x = 0.7867802583260974586119434782598572020503
% u'(x,0)=0 -> x = 2.343391664763270022811317102868649030635

\subsection{The case $\bm{\nu = 3}$}

If $v$ is given by \eqref{vv'} with $\nu = 3$, then a stagnation point occurs at the
level approximately equal to $-0.150899$; see Fig.~\ref{fig3}\,(b), where this point
is at the intersection of two solid lines marked $B_{3}$. They enclose the domain
$W_{3}$, in which $u$ given by \eqref{uu'} with $\nu = 3$ is an
eigenfunction corresponding to $\nu = 3$. Indeed, since $v$ is less than $-0.150899$
in this domain, it is the nodal one for the stream function equal to the difference
of $v$ and this value. 

There are two interior high spots on $F_{3}$: near its left endpoint at $x \approx
1.5715649$, and close to the right endpoint at $x \approx 2.6095109$, within
approximately $0.029250$ from the endpoint. Comparing the fluid domains $W_{3}$ and
$W_{3/2}$ shown in Figs.~\ref{fig3} and \ref{fig3.5}, respectively, we see that they
have the same structure of defining lines and the same number of high spots. Of
course, the domain obtained by reflection of $W_{3}$ in the $y$-axis also provides
an example of domain with two interior high spots.

\subsection{The case $\bm{\nu = 2}$}

Applying the same procedure for this value of $\nu$, one obtains the fluid domain
$W_{2}$ with two interior high spots on $F_{2}$, which distinguishes essentially
from $W_{3/2}$, $W_{5/2}$, $W_{7/2}$ and $W_{3}$; see Fig.~\ref{fig2}\,(b). Namely,
the nodal line of the eigenfunction $u$ (recall that \eqref{uu'} defines
it for $\nu = 2$) connects $F_{2}$ with the stagnation point of $v$ on $B_{2}$.

There are two interior high spots on $F_{2}$, both located near its endpoints: on
the left at $x \approx 0.786780$ and on the right at $x \approx 2.343392$. The
corresponding endpoints of $F_{2}$ are at $x \approx 0.774530$ and at $x \approx
2.387143$, respectively. Finally, we notice that the domain $W_{2}$ is bulbous on
both sides like $W_{5/2}$, $W_{7/2}$ and $W_{3}$. Of course, the domain obtained by
reflection of $W_{2}$ in the $y$-axis also provides an example of domain with two
interior high spots.

\section{Concluding remarks}

Two-dimensional sloshing is a common type of fluid oscillations in canals of uniform
cross-section and similar troughs with vertical end-walls. Investigating this motion
by means of a semi-inverse method, several examples of sloshing domains have been
constructed, all of which have the following common property: there is at least one
interior high spot, that is, a point on the free surface, where its extremal
elevation occurs for a fundamental eigenmode.

In the version of semi-inverse method applied here, the velocity potential satisfies
the free-surface boundary condition, involving the spectral parameter proportional
to the sloshing frequency squared, whereas a wetted contour is to be determined from
the no-flow condition. Using this condition, we restricted ourselves to considering
contours that are either adjacent to the $y$-axis or lying in the fourth quadrant.
Along with the latter contours, their symmetric images in the $y$-axis also provide
sloshing domains with interior high spots.

Let us recall some characteristic features of the constructed domains with interior
high spots:

$\bullet$ Many of these domains, but not all, have multiple interior high spots.

$\bullet$ All these domains are bulbous on the side, where an interior high spot is
located.

$\bullet$ Each found interior high spot is located close to an endpoint of the free
surface.

$\bullet$ The bottom profile of every found domain has at least one corner point (domains with smooth bottom profiles can also be constructed; see Remark~\ref{remark:4}).

$\bullet$ A single nodal line of the velocity potential connects $F$ and $B$ in each
example.

\smallskip

It is clear that a sloshing domain $W \subset \RR^2_-$ defines a trough $W \times
(0, \ell) \subset \RR^3_-$ for any $\ell > 0$. Moreover, if $u (x, y)$ is a
fundamental eigenmode of sloshing in~$W$, then this function plays the same role for
$W \times (0, \ell)$. Therefore, if $W$ has an interior high spot, then there is a
straight line (parallel to trough's generators) in the free surface of $W \times (0,
\ell)$, each point of which is a high spot interior with respect to the trough's
free surface.

In conclusion, we conjecture that the result obtained in \cite{KK1} for domains
satisfying John's condition and having smooth bottom (it bans interior high spots
for such domains) is still valid when the bottom is nonsmooth. The domain adjoining
$W_{5/2}$ on the left provides a basis for this conjecture as well as similar
domains adjoining $W_{7/2}$ and $W_{3}$.

{\small

}

\end{document}